\documentclass[11pt]{article}

\usepackage{amsmath,amssymb,amsthm}
\usepackage[sort,compress]{cite}
\usepackage{xurl}
\usepackage{needspace}
\usepackage{xcolor}
\usepackage{hyperref}
\hypersetup{
  colorlinks=true,
  linkcolor=blue,
  citecolor=blue,
  urlcolor=blue
}

\newtheorem{theorem}{Theorem}[section]
\newtheorem{proposition}[theorem]{Proposition}
\newtheorem{problem}[theorem]{Problem}
\newtheorem{corollary}[theorem]{Corollary}
\newtheorem{algorithm}[theorem]{Algorithm}

\title{\textbf{On the polynomial values represented by binary quadratic forms}}
\author{%
Bogdan Grechuk\thanks{School of Computing and Mathematical Sciences, University of Leicester, LE1 7RH, United Kingdom; \texttt{bg83@leicester.ac.uk}}
\and
Jamal Agbanwa\thanks{Independent Researcher; \texttt{agbanwajamal03@gmail.com}}
}
\date{}

\begin{document}

\maketitle

\begin{abstract}
Many Diophantine equations can be reduced to the question of whether, for a given non-degenerate integral binary quadratic form $F$ and a univariate polynomial $P$ with integer coefficients, $P(x)$ can be represented by $F$ for infinitely many values of $x$. We develop a method for answering this question for certain cubic and quartic polynomials $P$, as well as for certain polynomials of the form $P(x)=R(Q(x))$, where $R(t)$ and $Q(x)$ are polynomials of degrees $3$ and $2$, respectively. Applying this method with $F(y,z)=y^2+z^2$, $R(t)=t^3-4$ and $Q(x)=x^2$, we conclude that $x^6-4$ is a sum of two squares infinitely often. In turn, this implies that the equation $y^2+x^3y+z^2+1=0$ has infinitely many integer solutions. Prior to this work, it was the shortest equation for which it was unknown whether its integer solution set is finite or infinite. We conclude with a list of the new shortest equations for which the finiteness problem remains open. All main results of this paper have been formalized in Lean using Aristotle.
\end{abstract}

\medskip
\noindent\textbf{Keywords:} Diophantine equations; integer solutions; quadratic forms; sums of squares.

\noindent\textbf{Mathematics Subject Classification (2020):} 11D09, 11D45.

\medskip

\section{Introduction}\label{sec:intro}

The study of polynomial Diophantine equations, i.e., polynomial equations with integer coefficients for which we are interested only in integer solutions, is almost as old as mathematics. Informally, a Diophantine equation is considered ``solved'' if we can ``describe'' all its integer solutions. However, the word ``describe'' here is difficult to define rigorously. Instead, one may ask the following weaker but completely formal question.

\begin{problem}\label{prob:finite}
	Given a polynomial $P(x_1,\dots,x_n)$ with integer coefficients, determine whether the equation	 
	\begin{equation}\label{eq:diofgen}
		P(x_1,\dots,x_n) = 0
	\end{equation}
	has finitely or infinitely many integer solutions. In the first case, list all its integer solutions.
\end{problem}

Problem \ref{prob:finite}, in particular, determines whether the solution set to \eqref{eq:diofgen} is non-empty, and is therefore undecidable in general \cite{MR258744}. In \cite[Chapter 8]{MR4823864}, the first author initiated a project for solving Problem \ref{prob:finite} systematically for all equations \eqref{eq:diofgen} arranged by \emph{length}. The length $l(P)$ of a polynomial $P$ written in its simplified form (where no two monomials differ solely by a constant factor) is defined as
$$
l(P) := \sum_{i=1}^k \log_2(|a_i|) + \sum_{i=1}^k d_i,
$$
where $k$ is the number of distinct monomials of $P$, $d_i$ is the degree of the $i$-th monomial, and $a_i$ is its coefficient. 
For example, the equation
\begin{equation}\label{eq:y2px3ypz2p1}
	y^2+x^3y+z^2+1=0
\end{equation}
has length
$$
l = 2 + (3+1) + 2 + 0 + \sum_{i=1}^4 \log_2 1 = 8.
$$
The length $l(P)$ is a natural parameter because it approximates the number of symbols needed to write down the polynomial $P$ as a sum of monomials, provided that all coefficients are written in binary and the power symbol is not used (e.g., we write $x^3y$ as $xxxy$).
Following \cite[Section~1.1.1]{MR4823864}, we call two polynomial Diophantine equations \emph{equivalent} if they can be transformed into one another via renaming or permuting the variables, sign changes of the variables ($x_i \to -x_i$), or multiplication of the whole equation by $-1$. Then, for any $B > 0$, there are only finitely many inequivalent equations \eqref{eq:diofgen} satisfying $l \leq B$. Hence, it is possible to arrange all equations \eqref{eq:diofgen} in non-decreasing order of their length and systematically solve Problem \ref{prob:finite} in this order.

In \cite[Section~8.3.1]{MR4823864}, it was reported that Problem \ref{prob:finite} had been solved for all Diophantine equations of length $l<9$ except \eqref{eq:y2px3ypz2p1}. It was also remarked that equation \eqref{eq:y2px3ypz2p1} can be rewritten as 
\begin{equation}\label{eq:y2px3ypz2p1aux}
	(x^3+2y)^2 + (2z)^2 = x^6-4.
\end{equation}
Together with an easy parity argument, this implies that Problem \ref{prob:finite} for equation \eqref{eq:y2px3ypz2p1} reduces to the question whether $x^6-4$ can be represented as a sum of two squares for infinitely many integers $x$. 
%In fact, famous Bunyakovsky conjecture predicts that if polynomial $P$ with integer coefficients is irreducible over rationals, has positive leading term, and there is no prime $p$ dividing $P(x)$ for all $x$, then $P(x)$ is a prime for infinitely many positive integers $x$. Applying this to $P(x)=x^6-4$ results in infinitely many (necessarily odd) values of $x$ for which $x^6-4$ is a prime, and all primes equal to $1$ modulo $4$ are sums of two squares by Fermat's theorem. However, the Bunyakovsky conjecture is a very difficult open question.

Problem \ref{prob:finite} for equation \eqref{eq:y2px3ypz2p1} was listed as an open question in the monograph \cite{MR4823864}, in the collection of open problems \cite{grechuk2024systematic}, and on MathOverflow\footnote{\url{https://mathoverflow.net/questions/454040/}}, where Daniel Loughran presented an advanced proof that $x^6-4$ is a sum of two rational squares for infinitely many rational numbers $x$. He also remarked that he ``would be very happy to see an elementary answer which avoids all this geometry''. 

The sum-of-two-squares question arising from \eqref{eq:y2px3ypz2p1aux} is a special case of a more general problem: when are the values of a one-variable polynomial represented by a fixed binary quadratic form?  More precisely, if
\[
F(y,z)=Ay^2+Byz+Cz^2, \qquad A,B,C \in {\mathbb Z},
\]
is a binary quadratic form and $P(x)$ is a polynomial with integer coefficients, one may ask whether
\begin{equation}\label{eq:FyzPx-lit}
	F(y,z)=P(x)
\end{equation}
is solvable in $(y,z)$ for infinitely many values of $x$. The classical theory of binary quadratic forms gives a very detailed description of which fixed integers are represented by a fixed form. For $y^2+z^2$, Fermat's theorem and its standard extension characterize the represented positive integers by the parity of the exponents of primes congruent to $3\pmod 4$. More generally, the theory of genera, classes, composition, automorphs and local representation is treated in standard references such as \cite{Cox1989,Buell1989,Cassels1978,OMeara1963,Serre1973}. From an asymptotic point of view, Landau's theorem on sums of two squares was generalized by Bernays to primitive positive definite binary quadratic forms \cite{Bernays1912}; see also James \cite{James1938} for the distribution of integers represented by quadratic forms. These results describe the represented integers in the aggregate, but they do not by themselves decide whether a prescribed sparse sequence $P({\mathbb Z})$ meets the represented set infinitely often.

When $P$ has degree at most two, equation \eqref{eq:FyzPx-lit} is itself a quadratic Diophantine equation in three variables. In principle, such equations are covered by the general algorithm of Grunewald and Segal for integer solutions of quadratic equations \cite{MR2055712}. There is also a direct analytic theorem of Friedlander and Iwaniec \cite{FriedlanderIwaniec1978}: for a quadratic polynomial $P(x)=ax^2+bx+c$ with $a>0$ and negative discriminant, and a binary quadratic form satisfying natural local conditions, they obtain lower bounds of order $X(\log X)^{-1/2}$ for the number of $0<x\le X$ for which $P(x)$ is represented. Their proof uses the half-dimensional sieve \cite{Iwaniec1976}. More recently, Cao and Huang obtained geometric-sieve results for affine quadrics and, as an auxiliary application, density estimates for quadratic-polynomial values represented by positive definite binary quadratic forms \cite{CaoHuang2022}. These results give powerful analytic information in the quadratic case, whereas the main difficulty of the present paper begins with cubic and higher-degree $P$.

When $P$ has degree three, the affine surface defined by \eqref{eq:FyzPx-lit} is closely related to Châtelet surfaces and to pencils of conics. Iwaniec and Munshi studied rational points on equations of the form \eqref{eq:FyzPx-lit}, where $F$ is an irreducible binary quadratic form and $P$ is an irreducible cubic polynomial \cite{IwaniecMunshi2010}. This is one of the central analytic papers on cubic polynomials represented by binary quadratic forms, but their result is primarily about rational points rather than integral points. A recent result more directly concerned with integral representations is due to Iyer \cite{Iyer2025}, who proves lower bounds for the frequency with which certain irreducible monic cubic polynomials are sums of two squares; for example, his methods apply to polynomials such as $P(x)=x^3+h$ under explicit hypotheses on $h$. His proof uses units in degree-six number fields. In addition, \cite[Section 5.4]{MR4823864} develops powerful methods for solving Problem \ref{prob:finite} for equations of the form \eqref{eq:FyzPx-lit} with cubic $P$ and certain multiplicative forms $F$.

There is also an integral-point literature on affine Châtelet surfaces. Gundlach considered integral Brauer--Manin obstructions for equations $y^2+z^2+x^k=m$, equivalently $y^2+z^2=m-x^k$, and obtained conditional completeness results under Schinzel's hypothesis \cite{Gundlach2013}. Berg studied Brauer--Manin obstructions to integral points on affine surfaces $y^2-az^2=P(x)$ \cite{Berg2017}. These results are close in spirit to the study of Problem \ref{prob:finite} for equation \eqref{eq:FyzPx-lit}; however, they are primarily local-global statements, and some are conditional on Schinzel's hypothesis.

In summary, previous results either solve the degree-$\le 2$ case by the theory of quadratic equations, prove rational-point theorems or partial integral results in the cubic case, or establish conditional local-global results. They do not directly provide an elementary construction proving that the degree-six representation problem $y^2+z^2=x^6-4$ has infinitely many integral solutions.

Given that the project of solving Problem \ref{prob:finite} for all Diophantine equations ordered by length had stalled at equation \eqref{eq:y2px3ypz2p1}, recent work has also considered less demanding questions for special families of equations. For example, if one asks only whether the solution set is non-empty and restricts attention to cubic equations, then this problem has recently been solved for all cubic equations of length $l < 12+\log_2 3 \approx 13.6$ \cite{grechuk2026shortest}. Instead of restricting the degree, one may restrict the number of variables \cite[Section 3]{MR4823864}, the number of monomials \cite{GRECHUK202369}, or consider only equations of Fermat type \cite{ratcliffe2025generalized}. See \cite[Section 8]{grechuk2024systematic} for a dynamically updated list of recently solved equations in various categories.

In this work, we develop a general elementary method for proving, in many cases, that values of a polynomial $P(x)$ are represented by a given non-degenerate integral binary quadratic form $F$ for infinitely many values of $x$. The method can be tried if either (i) $P$ is cubic, (ii) $P$ is quartic, or (iii) $P$ is a polynomial of degree $6$ representable as
\begin{equation}\label{eq:qx3pc}
	P(x)=R(Q(x)),
\end{equation}
where $R(t)$ and $Q(x)$ are cubic and quadratic polynomials with integer coefficients, respectively. The method is not guaranteed to work for all polynomials of the forms (i)--(iii), but it works well in many interesting examples. In particular, applying it to $F(y,z)=y^2+z^2$, $R(t)=t^3-4$ and $Q(x)=x^2$, we prove that $x^6-4$ is a sum of two squares infinitely often, and conclude that equation \eqref{eq:y2px3ypz2p1} has infinitely many integer solutions.

The resolution of equation \eqref{eq:y2px3ypz2p1} completes the project of solving Problem \ref{prob:finite} for all equations of length $l<9$, and the next step is to study equations of length $l=9$. Before this work, there were $24$ open equations of this length; see \cite[Version 7]{grechuk2024systematic}. Four of them are similar to equation \eqref{eq:y2px3ypz2p1}, and we resolve these equations by the same method.

The remainder of the paper is organised as follows. Section \ref{sec:method} describes a method for proving that values of certain polynomials of the form \eqref{eq:qx3pc} are sums of two squares infinitely often. Section \ref{sec:applications} applies this method to solve Problem \ref{prob:finite} for equation \eqref{eq:y2px3ypz2p1}, as well as for four equations of length $l=9$ that were left open in \cite{MR4823864,grechuk2024systematic}. Section \ref{sec:otherforms} records an extension of the same tangent construction to general binary quadratic forms.

\textbf{Declaration on the use of AI.}
The main results of this paper emerged from extended, multi-round interactions between the authors and ChatGPT 5.5 Pro. As is often the case in collaborative work, it is difficult to separate the individual contributions precisely. For example, in one of the rounds, ChatGPT produced the first essentially correct proof that equation \eqref{eq:y2px3ypz2p1} has infinitely many integer solutions. That proof relied on substantial hints from the authors; at the same time, those hints were themselves influenced by the model's responses in earlier rounds, and the development was therefore iterative. The proofs initially produced by ChatGPT were almost always unnecessarily long, and the authors substantially revised, shortened, and streamlined them. The final mathematical arguments and exposition were written, checked, and approved by the authors, who take full responsibility for the content of the paper. ChatGPT was also used for minor language polishing.

In addition, we used Aristotle~\cite{achim2025aristotle} to formalize all of the main results of this paper in the Lean theorem prover~\cite{demoura2021lean4}. The resulting Lean project and proof file are available on GitHub\footnote{Project snapshot: \url{https://github.com/JAgbanwa/PolynomialDiophantineEquations/tree/f3203621e90e5b1b087ca473ba8e8a7a2856b009/sumsquares_revised}. \\
Proof file: \url{https://github.com/JAgbanwa/PolynomialDiophantineEquations/blob/a89faa44bce3de44fcfcc3ad17785a0c6f14f2b2/sumsquares_revised/RequestProject/Main.lean}.}.
%The project pins Lean~4.28.0 and Mathlib commit
%\texttt{8f9d9cff6bd728b17a24e163c9402775d9e6a365}; it includes
%\texttt{formalization.yaml}, the statement interface \texttt{Challenge.lean},
%\texttt{Solution.lean}, and a comparator configuration for sixteen advertised results.
The proof sources contain no proof placeholders or added axioms and use only the standard Lean axioms.
%\texttt{propext}, \texttt{Classical.choice}, and \texttt{Quot.sound}.
%In particular, the four non-squareness checks in Section~\ref{sec:applications} use integer bounds and \texttt{norm\_num}, not \texttt{native\_decide}.
%The deliberate placeholders in \texttt{Challenge.lean} specify statements for comparison;
%they are not used by the proofs in \texttt{Solution.lean}.
%The formalization verifies the correctness implications of Algorithms~\ref{alg:main}
%and~\ref{alg:generalform}, conditional on the supplied seed data; it does not implement
%the search or the quadratic-equation decision procedures discussed below.

\section{The sum of two squares case}\label{sec:method}

In this section we study Problem \ref{prob:finite} for equations of the form
$$
	y^2+z^2=P(x),
$$
where $P(x)$ is given by \eqref{eq:qx3pc}. We present the sum of two squares case first, because it is of independent interest, suffices for applications in Section \ref{sec:applications}, and, most importantly, because the resulting algorithm is easier to understand and apply. 

Let ${\cal S}_2$ be the set of positive integers representable as sums of two integer squares. Equivalently, \cite[Theorem 5.11]{MR4823864}, ${\cal S}_2$ is the set of positive integers in whose prime factorizations every prime $p$ with $p\equiv 3 \pmod{4}$ occurs with an even exponent. This characterization immediately implies that

\begin{itemize}
	\item[(*)] if $a,b$ are positive integers and ${\cal S}_2$ contains two of the integers $a,b,ab$, then ${\cal S}_2$ contains all three of them. 
\end{itemize}

In this section, we study the following special case of Problem \ref{prob:finite}.

\begin{problem}\label{prob:sumsquares}
	Given a polynomial $P(x)$ with integer coefficients, determine whether there are infinitely many integers $x$ such that $P(x) \in {\cal S}_2$.
\end{problem}

If the degree of $P(x)$ is at most two, the problem reduces to deciding whether the quadratic Diophantine equation $y^2+z^2=P(x)$ has infinitely many integer solutions; there is a general algorithm for answering this question for all quadratic equations \cite{MR2055712}. A powerful method for solving this problem for cubic polynomials $P(x)$ is described in \cite[Section 5.2.3]{MR4823864}. Here, we present a method that works for many cubic and quartic polynomials, as well as polynomials of the form \eqref{eq:qx3pc} with cubic $R(t)$ and quadratic $Q(x)$. To unify these cases, we assume that $P(x)$ has the form \eqref{eq:qx3pc} with $(\deg R, \deg Q)$ equal to either $(3,1)$, or $(4,1)$, or $(3,2)$.

To prove that $P(x)$ in \eqref{eq:qx3pc} is a sum of two squares infinitely often, we need to find infinitely many integers $t$ such that 
\begin{itemize}
	\item[(i)] $R(t) \in {\cal S}_2$, and
	\item[(ii)] $t=Q(x)$ for some integer $x$. 
\end{itemize}

To find many integers $t$ satisfying (i), we use the following idea. For any fixed integer $u \in {\mathbb Z}$, there is a unique polynomial $D_u(t)$ with integer coefficients satisfying
\begin{equation}\label{eq:taylorDu}
	R(t)=R(u)+R'(u)(t-u)+(t-u)^2 D_u(t).
\end{equation}
Multiplying both sides of \eqref{eq:taylorDu} by $4R(u)$ and rearranging, we obtain
\begin{equation}\label{eq:generalTangentIdentity}
	4R(u)R(t)=(2R(u)+R'(u)(t-u))^2+(t-u)^2 (4R(u)D_u(t)-R'(u)^2).
\end{equation}

This identity leads to the following sufficient condition for guaranteeing that $R(Q(x))\in\mathcal S_2$ infinitely often.

\begin{proposition}\label{prop:sumsquarecriterion}
Let $R(t)$ and $Q(x)$ be non-constant polynomials with integer coefficients, and let
$u\in\mathbb Z$ satisfy $R(u)\in\mathcal S_2$. Define $D_u$ by
\eqref{eq:taylorDu}. If the auxiliary equation
\begin{equation}\label{eq:auxquad2}
    4R(u)D_u(Q(x))-R'(u)^2=v^2
\end{equation}
has infinitely many integer solutions $(x,v)$, then $R(Q(x))\in\mathcal S_2$
for infinitely many integers $x$.
\end{proposition}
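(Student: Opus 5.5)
The plan is to substitute $t=Q(x)$ into the tangent identity \eqref{eq:generalTangentIdentity} and use property (*). For each integer solution $(x,v)$ of \eqref{eq:auxquad2}, with $t=Q(x)$ the identity becomes
\[
4R(u)R(Q(x))=\bigl(2R(u)+R'(u)(Q(x)-u)\bigr)^2+\bigl((Q(x)-u)v\bigr)^2 ,
\]
so $4R(u)R(Q(x))$ is a sum of two squares. Since $R(u)\in\mathcal S_2$ is a positive integer, $4R(u)=(2a)^2+(2b)^2$ also lies in $\mathcal S_2$. Provided $R(Q(x))>0$, the left side is a positive integer of $\mathcal S_2$. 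Applying (*) with $a=4R(u)$ and $b=R(Q(x))$ then gives $R(Q(x))\in\mathcal S_2$.

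Two issues remain: positivity of $R(Q(x))$, and producing infinitely many distinct $x$ rather than merely infinitely many pairs $(x,v)$. For the second, each fixed $x$ admits at most two values of $v$, so infinitely many solutions force infinitely many distinct $x$. For the first, note that the left side $4R(u)R(Q(x))$ equals a sum of squares and is therefore $\ge 0$. Because $R(u)>0$, this gives $R(Q(x))\ge 0$ for every solution. The polynomial $R\circ Q$ is non-constant and so has only finitely many roots. Discarding the finitely many $x$ with $R(Q(x))=0$ leaves infinitely many $x$ with $R(Q(x))$ a positive integer, and the argument of the first paragraph applies to each of them.

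The only point needing care is this positivity and finiteness bookkeeping; the rest is a direct substitution. One should also check that $D_u$ has integer coefficients, so that \eqref{eq:auxquad2} is a genuine integer equation. This holds because $R(t)-R(u)-R'(u)(t-u)$ is an integer polynomial divisible by the monic polynomial $(t-u)^2$.
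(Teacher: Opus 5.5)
Your proof is correct and follows the paper's argument. You substitute $t=Q(x)$ into \eqref{eq:generalTangentIdentity}, note that each $x$ carries at most two values of $v$, discard the finitely many zeros of $R\circ Q$, and cancel the represented factor $4R(u)$ via (*); the only cosmetic differences are that you obtain $4R(u)\in\mathcal S_2$ directly as $(2a)^2+(2b)^2$ rather than through (*), and you spell out the nonnegativity of $R(Q(x))$ that the paper uses implicitly.
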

\begin{proof}
For each fixed $x$, equation~\eqref{eq:auxquad2} is satisfied for at most two integer values of $v$, so its infinitely many solutions have infinitely many distinct $x$-coordinates. Equation $R(Q(x))=0$ has only finitely many roots, hence there are infinitely many integer solutions $(x,v)$ to \eqref{eq:auxquad2} such that $R(Q(x))\neq 0$. For any such pair $(x,v)$, substituting $t=Q(x)$ into \eqref{eq:generalTangentIdentity} and using \eqref{eq:auxquad2} shows that 
$$
	4R(u)R(Q(x))=(2R(u)+R'(u)(Q(x)-u))^2+((Q(x)-u)v)^2 
$$
is a sum of two integer squares. Since $R(u)>0$, and $R(Q(x)) \neq 0$, this implies that $4R(u)R(Q(x)) \in \mathcal S_2$. Also, $4 \in \mathcal S_2$ and $R(u) \in \mathcal S_2$ imply that $4R(u)\in\mathcal S_2$ by (*), so another application of (*) gives
$R(Q(x))\in\mathcal S_2$.
\end{proof}

We have $\deg D_u(t) = \deg R(t) - 2$. Hence, the left-hand side of \eqref{eq:auxquad2} is linear in $x$ when $(\deg R, \deg Q)=(3,1)$. It is quadratic in $x$ when $(\deg R, \deg Q)$ is equal to either $(4,1)$ or $(3,2)$. Thus, in all cases, equation \eqref{eq:auxquad2} takes the form 
\begin{equation}\label{eq:quadxt}
	ax^2+bx+c = v^2
\end{equation} 
for some integer coefficients $a,b,c$.

\begin{proposition}\label{prop:sumaux}
Let $a,b,c\in\mathbb Z$. Assume that
\begin{itemize}
	\item[(a)] either $a=0$, or $a>0$ and $a$ is not a perfect square, 
	\item[(b)] $b^2 - 4 a c \neq 0$, and
	\item[(c)] equation \eqref{eq:quadxt} has an integer solution $(x_0,v_0)$.
\end{itemize}
Then equation \eqref{eq:quadxt} has infinitely many integer solutions $(x,v)$,
with infinitely many distinct values of $x$.
\end{proposition}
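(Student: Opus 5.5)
The proof splits according to hypothesis (a): a short direct parametrization when $a=0$, and a Pell-equation argument when $a>0$ is not a perfect square. In both cases, hypothesis (b) is what guarantees that the solutions we generate have infinitely many distinct $x$.

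\emph{Case $a=0$.} Here the equation is $bx+c=v^2$, and (b) gives $b\neq 0$. Starting from $(x_0,v_0)$, I put $v_k=v_0+kb$ for $k\in\mathbb Z$. Then
\[
v_k^2=v_0^2+2v_0kb+k^2b^2=bx_0+c+b(2v_0k+k^2b),
\]
so $x_k=x_0+2v_0k+bk^2$ is an integer with $bx_k+c=v_k^2$. Since $b\neq0$, $x_k$ is a non-constant quadratic in $k$. It therefore takes each value for at most two $k$, which gives infinitely many distinct $x$.

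\emph{Case $a>0$, not a square.} I complete the square. Set $X=2ax+b$ and $N=b^2-4ac$; multiplying \eqref{eq:quadxt} by $4a$ gives
\[
X^2-4av^2=N, \qquad N\neq0 \text{ by (b)}.
\]
By (c), $X_0=2ax_0+b$ and $v_0$ solve this equation. Because $a$ is a positive non-square, the Pell equation $p^2-aq^2=1$ has a solution with $\varepsilon=p_1+q_1\sqrt a>1$. The unit $\varepsilon$ is invertible in the finite ring $\mathbb Z[\sqrt a]/2a\mathbb Z[\sqrt a]$, since its inverse is its conjugate. Hence some power $\varepsilon^m=p+q\sqrt a$ satisfies $p\equiv1$ and $q\equiv0 \pmod{2a}$. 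This congruence step is the only delicate point, as it is exactly what preserves integrality of $x$.

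Given a solution $(X,v)$, I define
\[
X'=Xp+2avq, \qquad v'=vp+\tfrac{Xq}{2},
\]
which corresponds to $X'+2v'\sqrt a=(X+2v\sqrt a)(p+q\sqrt a)$.
\begin{itemize}
\item Because $2a\mid q$, the number $Xq/2$ is an integer, so $v'\in\mathbb Z$.
\item Because $p\equiv 1 \pmod{2a}$, we get $X'\equiv X\equiv b \pmod{2a}$, so $x'=(X'-b)/(2a)$ is an integer.
\item Multiplicativity of the norm gives $X'^2-4av'^2=N\cdot 1=N$, so $(x',v')$ solves \eqref{eq:quadxt}.
\end{itemize}

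Iterating this map from $(X_0,v_0)$ produces the real numbers $(X_0+2v_0\sqrt a)\varepsilon^{mk}$ for $k\ge0$. These are pairwise distinct because $X_0+2v_0\sqrt a\neq0$ (its norm is $N\neq0$) and $\varepsilon^m>1$. Hence the pairs $(X,v)$ are pairwise distinct. For a fixed $X$ there are at most two $v$ with $X^2-4av^2=N$, so infinitely many distinct $X$ occur, and therefore infinitely many distinct $x$.
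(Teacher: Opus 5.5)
Your proof is correct and uses the paper's case split; the $a=0$ case is the same parametrization $v\equiv v_0 \pmod{|b|}$, $x=(v^2-c)/b$. For $a>0$ the paper simply cites \cite[Proposition 5.4]{MR4823864}, whereas you prove that case directly: you complete the square to $X^2-4av^2=b^2-4ac$ and act by a power of a Pell unit congruent to $1$ modulo $2a$, which keeps $x$ integral and gives infinitely many distinct $x$.
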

\begin{proof}
If $a>0$ is not a perfect square, this is \cite[Proposition 5.4]{MR4823864}. If $a=0$, then condition (b) gives $b\neq 0$; starting from one solution $(x_0,v_0)$, any integer $v\equiv v_0 \pmod{|b|}$ gives an integer $x=(v^2-c)/b$, so there are infinitely many solutions. In either case, the set of corresponding $x$-coordinates is infinite, since \eqref{eq:quadxt} has at most two possible values of $v$ for each fixed $x$.
\end{proof}

%\noindent\emph{Formalization convention.}
%The Lean predicate for a sum of two squares also includes zero. Its version of
%Proposition~\ref{prop:sumsquarecriterion} consequently does not need the
%non-constancy assumption. For the polynomial cases considered here, deleting the
%finitely many zeros, as in the proof above, gives exactly the conclusion with our
%positive-integer set $\mathcal S_2$.

%\paragraph{Checking the seed condition.}\label{par:seed2}
For a fixed equation \eqref{eq:quadxt} passing (a) and (b), condition (c) is a question of integral solubility for a quadratic equation, so it is decidable by the algorithm of Grunewald and Segal~\cite{MR2055712}. However, for binary quadratic equations, much simpler and faster methods are available, see \cite[Section 3.1]{MR4823864}. In fact, condition (c) has a certificate verifiable in polynomial time and therefore can always be checked in deterministic exponential time \cite{Lagarias1979}. In the linear case $a=0$, there is a particularly simple finite test: check $v^2\equiv c\pmod{|b|}$ for $0\leq v<|b|$, where $b\ne0$ by (b); a successful residue gives $x=(v^2-c)/b$.

This gives the following recipe for solving Problem \ref{prob:sumsquares} for certain polynomials $P(x)$ of the form \eqref{eq:qx3pc}.

\begin{algorithm}\label{alg:main}~ 
	\begin{itemize}
		\item \textbf{Input:} Polynomials $R(t)$ and $Q(x)$ with integer coefficients such that $(\deg R, \deg Q)$ is equal to either $(3,1)$, or $(4,1)$, or $(3,2)$.
		\item \textbf{Output:} If the algorithm terminates, then $P(x)=R(Q(x)) \in {\cal S}_2$ for infinitely many integers $x$. 
	\end{itemize}
	\begin{enumerate}
		\item By direct search, find $u$ such that $R(u) \in {\cal S}_2$. 
		\item For this $u$, write down equation \eqref{eq:auxquad2} in the form \eqref{eq:quadxt} and check conditions (a), (b) and (c) of Proposition~\ref{prop:sumaux}.
		\item If all conditions (a)--(c) are satisfied, STOP. Otherwise return to Step 1 and try to find a different $u$. 
	\end{enumerate}
\end{algorithm}

This algorithm does not always terminate, because a suitable $u$ is not guaranteed to exist. However, it works in many interesting examples, as we illustrate in the next section.

\section{Application to the shortest open equations}\label{sec:applications}

In this section, we apply Algorithm \ref{alg:main} to solve Problem \ref{prob:sumsquares} for $P(x)=x^6+f$ for $f\in\{8,5,-3,-4\}$. This resolves Problem \ref{prob:finite} for equation \eqref{eq:y2px3ypz2p1} and for four further equations of length $l=9$.

In all these examples, we will apply Algorithm \ref{alg:main} with input 
\begin{equation}\label{eq:inputPQ}
Q(x)=x^2 \quad \text{and} \quad R(t)=t^3+f \quad \text{for} \,\,f\in\{8,5,-3,-4\},
\end{equation}
so that $P(x)=R(Q(x))=(x^2)^3+f=x^6+f$. In this case, $R(u)=u^3+f$ and $R'(u)=3u^2$, hence, from \eqref{eq:taylorDu},
$$
	D_u(t) = \frac{R(t)-R(u)-R'(u)(t-u)}{(t-u)^2} = t+2u,
$$
and equation \eqref{eq:auxquad2} reduces to
$$
	4(u^3+f)(x^2+2u)-9u^4 = v^2,
$$
or equivalently
\begin{equation}\label{eq:auxquad3}
	4(u^3+f)x^2-u(u^3-8f) = v^2.
\end{equation}

\Needspace{10\baselineskip}
We begin with equation \eqref{eq:y2px3ypz2p1}.

\begin{corollary}\label{prop:y2px3ypz2p1}
	Equation \eqref{eq:y2px3ypz2p1} has infinitely many integer solutions.
\end{corollary}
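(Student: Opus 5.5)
The plan is to reduce the statement to Problem~\ref{prob:sumsquares} for $P(x)=x^6-4$ and then run Algorithm~\ref{alg:main} with the input \eqref{eq:inputPQ} for $f=-4$.

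\textbf{Reduction.} By \eqref{eq:y2px3ypz2p1aux}, integer solutions of \eqref{eq:y2px3ypz2p1} correspond to representations $x^6-4=Y^2+Z^2$ with $Y=x^3+2y$ and $Z=2z$. So I need $Y\equiv x \pmod 2$ and $Z$ even. I would check that every representation of $x^6-4$ as a sum of two squares can be arranged this way, possibly after swapping $Y$ and $Z$.
\begin{itemize}
\item If $x$ is odd, then $x^6-4$ is odd, so exactly one of the two squares is odd. Call its root $Y$; then $Y$ has the same parity as $x^3$, and $Z$ is even.
\item If $x$ is even, then $x^6-4\equiv 4 \pmod 8$. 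Since a sum of two odd squares is $\equiv 2\pmod 8$, both roots must be even.
\end{itemize}
In either case $y=(Y-x^3)/2$ and $z=Z/2$ are integers. Distinct $x$ give distinct solutions, so it suffices to show that $x^6-4\in\mathcal S_2$ for infinitely many integers $x$.

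\textbf{Applying the method.} With $f=-4$, the auxiliary equation \eqref{eq:auxquad3} becomes
$$4(u^3-4)x^2-u(u^3+32)=v^2.$$
I would search over $u$ with $u^3-4\in\mathcal S_2$ and check the hypotheses of Proposition~\ref{prop:sumaux}.
\begin{itemize}
\item Condition (a): $a=4(u^3-4)$ must be positive and not a perfect square. This means $u^3-4$ is not a square, which rules out $u=2$ and $u=5$ (there $u^3-4=4$ and $121$).
\item Condition (b): here $b=0$, so it reduces to $ac\neq 0$. This holds whenever $u\neq 0$ and $u^3\neq -32$.
\item Condition (c): find one explicit integer solution $(x_0,v_0)$. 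This is a Pell-type equation in $(x,v)$ with fixed coefficients, so a small search, or the methods of \cite[Section 3.1]{MR4823864}, should produce one.
\end{itemize}
Once all three conditions hold, Proposition~\ref{prop:sumaux} gives infinitely many solutions, and Proposition~\ref{prop:sumsquarecriterion} (with $Q(x)=x^2$, $R(t)=t^3-4$) gives $x^6-4\in\mathcal S_2$ infinitely often.

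\textbf{Main obstacle.} The hard step is finding a $u$ for which condition (c) actually holds. Conditions (a) and (b) hold generically, but local obstructions can kill (c).

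The first candidate, $u=6$, gives $u^3-4=212=4\cdot 53\in\mathcal S_2$. The equation becomes $848x^2-1488=v^2$, which after $v=4w$ is $53x^2-93=w^2$. Small $x$ give no solution, so I would test this modulo small primes (e.g.\ $3$ and $53$). If it is obstructed, I would move on.

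The next candidate is $u=9$, with $u^3-4=725=5^2\cdot 29$. The equation is $2900x^2-6849=v^2$. I would search along the Pell orbit or reduce to reduced forms of discriminant $4\cdot 2900$.

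I would continue over further $u$ (including negative values of $t$ where $R(u)>0$ is impossible, so only $u\ge 2$ matter) until a seed appears. That $u$ and its explicit solution $(x_0,v_0)$ serve as the verifiable certificate in the final proof.
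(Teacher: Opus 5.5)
\textbf{Gap: the proof never exhibits a seed.} Your strategy is the paper's: Algorithm~\ref{alg:main} with $R(t)=t^3-4$ and $Q(x)=x^2$, then a parity argument. Your parity reduction is correct. For even $x$ it is actually vacuous: two even roots would give $16w^6-1\in\mathcal S_2$, which is impossible modulo $4$, so every admissible $x$ is odd, as the paper notes.

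The gap is exactly the step you flag as the main obstacle. You never produce a $u$ with $u^3-4\in\mathcal S_2$ together with an explicit integer solution $(x_0,v_0)$ of $4(u^3-4)x^2-u(u^3+32)=v^2$ satisfying (a)--(c) of Proposition~\ref{prop:sumaux}. ``Continue over further $u$ until a seed appears'' is a search plan, not a proof. Algorithm~\ref{alg:main} is not guaranteed to terminate: for $x^6+3$, every admissible $u$ is locally obstructed. The whole content of the corollary is the existence of such a certificate.

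\textbf{Both of your candidates fail.}
\begin{itemize}
\item For $u=6$, reduce $w^2=53x^2-93$ modulo $3$: it forces $3\mid x$ and $3\mid w$, and then $9\mid 93$, a contradiction.
\item For $u=9$, and indeed for every odd $u$, the left-hand side is $\equiv -u^4\equiv 3\pmod 4$, so it cannot be a square.
\item For $u\equiv 0\pmod 4$, write $u=2k$ with $k$ even. Then $u^3-4=4(2k^3-1)$ with $2k^3-1\equiv 3\pmod 4$, so $u^3-4\notin\mathcal S_2$.
\end{itemize}
So only $u\equiv 2\pmod 4$ can possibly work.

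\textbf{A small search will not find the seed.} The paper uses $u=162$, where $162^3-4=350^2+2032^2$. The auxiliary equation is $17{,}006{,}096\,x^2-688{,}752{,}720=v^2$, with solution $x_0=22{,}108{,}343{,}594{,}783{,}571$ and $v_0=91{,}171{,}377{,}945{,}572{,}295{,}096$. The smallest working $u$ is $50$, and its seed is even larger. Finding such solutions needs a proper Pell-type solver, not brute force. Supplying a verified seed of this kind is what turns your outline into a proof.
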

\begin{proof}
	Equation \eqref{eq:y2px3ypz2p1} is equivalent to \eqref{eq:y2px3ypz2p1aux}, whose right-hand side is $x^6-4=(x^2)^3-4$. We will apply Algorithm \ref{alg:main} with input \eqref{eq:inputPQ} for $f=-4$. The smallest positive $u$ satisfying $R(u)=u^3-4\in {\cal S}_2$ is $u=2$, for which $u^3-4=4=2^2+0^2$. However, for $u=2$, equation \eqref{eq:auxquad3} reduces to $16x^2-80=v^2$. Hence, $a=16$ in \eqref{eq:quadxt} is a perfect square, and condition (a) fails.
	
	A direct search shows that, for example\footnote{The smallest positive integer $u$ satisfying all the conditions is $u=50$. However, the solution $(x_0,v_0)$ for $u=50$ is much larger, and, for this reason, we have decided to use $u=162$.}, $u = 162$ satisfies all the conditions. Indeed, 
	$$
		R(u)=u^3+f=162^3-4=350^2+2032^2 \in {\cal S}_2,
	$$
	while equation \eqref{eq:auxquad3} reduces to
	\begin{equation}\label{eq:eq1aux}
		17{,}006{,}096 \, x^2 - 688{,}752{,}720 = v^2.
	\end{equation}
	This is equation \eqref{eq:quadxt} with $(a,b,c)=(17{,}006{,}096,\, 0, \, -688{,}752{,}720)$. Then $a>0$, $a$ is not a perfect square, $b^2 - 4 a c \neq 0$, and equation \eqref{eq:eq1aux} has a solution 
	$$
		x_0 = 22{,}108{,}343{,}594{,}783{,}571, \quad v_0 = 91{,}171{,}377{,}945{,}572{,}295{,}096.
	$$   
	Hence, Proposition~\ref{prop:sumaux} gives infinitely many integer solutions of \eqref{eq:eq1aux}, and Proposition~\ref{prop:sumsquarecriterion} gives $x^6-4\in\mathcal S_2$ for infinitely many integers $x$. No such $x$ can be even: if $x=2w$, then $x^6-4=4(16w^6-1)$, and membership of $x^6-4$ in ${\cal S}_2$ would imply, by property (*), that $16w^6-1\in {\cal S}_2$. This is impossible, because this number is congruent to $3$ modulo $4$. Thus all such $x$ are odd. Then we have $x^6-4=A^2+B^2$ for some integers $A,B$ of opposite parity. By swapping $A$ and $B$ if needed, we may assume that $A$ is odd and $B$ is even, in which case we can write $A=x^3+2y$ and $B=2z$ for some integers $y$ and $z$. Hence, equation \eqref{eq:y2px3ypz2p1aux}, or, equivalently, equation \eqref{eq:y2px3ypz2p1}, has infinitely many integer solutions.  
\end{proof}

Corollary \ref{prop:y2px3ypz2p1} finishes the resolution of Problem \ref{prob:finite} for all polynomial Diophantine equations of length $l < 9$. Before this work, there were exactly $24$ inequivalent equations of length $l=9$ for which Problem \ref{prob:finite} was open; see \cite[Version 7]{grechuk2024systematic}. 
They consist of the four equations
\begin{equation}\label{eq:eq91}
	y^2+x^3y+z^2-2=0,
\end{equation}
\begin{equation}\label{eq:eq92}
	y^2+x^3y+z^2+z-1=0,
\end{equation}
\begin{equation}\label{eq:eq93}
	y^2+x^3y+z^2+z+1=0,
\end{equation}
\begin{equation}\label{eq:eq94}
	y^2+x^3y+y+z^2+1=0,
\end{equation}
two further equations $z^2+y^2z+2x^3+1=0$ and $x^3+x^2y^2+z^2+1=0$ solved by the second author using different methods while this paper was under review, see \cite[Section 8.8]{grechuk2024systematic}, and the $18$ equations listed in Table \ref{tab:l9openfin}.

\begin{table}
	\begin{center}
		\begin{tabular}{ |c|c|c| } 
			\hline
			Equation & Equation & Equation \\ 
			\hline\hline
			$z^2+y^2z+x^3-2=0$  & $y(x^3-z^2)=2x-1$ & $x^3y^2=z^4+1$ \\ 
			\hline
			$z^2+y^2z+x^3-x-1=0$  & $y(x^3-z^2)=2x+1$ & $x^4y^3=z^2+1$ \\ 
			\hline
			$z^2+y^2z+x^3y+1=0$ & $x^3y^2=z^3+2$ & $y^3-y=x^4-x$ \\ 
			\hline
			$x^2y+y^2z+z^2x=1$ & $x^3y^2=z^3-z+1$ & $y^3+y=x^4+x$ \\ 
			\hline
		 	$x^4+y^3+z^2+1=0$ & $x^3y^2=z^3+z+1$ & $x^4+xy+y^3-1=0$ \\ 
			\hline
			$y(x^3-z^2)=x^2+1$ & $x^3y^2=2z^3+1$ & $x^4+xy+y^3+1=0$ \\ 
			\hline
		\end{tabular}
		\caption{\label{tab:l9openfin} The eighteen equations of length $l=9$ for which Problem \ref{prob:finite} remains open after this work.}
	\end{center} 
\end{table} 

Equations \eqref{eq:eq91}-\eqref{eq:eq94} are similar to equation \eqref{eq:y2px3ypz2p1}, and can be solved by the same method.

\begin{corollary}\label{prop:eq9194}
	Each of the equations \eqref{eq:eq91}-\eqref{eq:eq94} has infinitely many integer solutions.
\end{corollary}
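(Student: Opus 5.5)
The plan is to rewrite each of \eqref{eq:eq91}--\eqref{eq:eq93} as a representation $A^2+B^2=x^6+f$ with parity constraints on $A,B$. Then I would run Algorithm~\ref{alg:main} with input \eqref{eq:inputPQ} for the relevant $f$, exactly as in the proof of Corollary~\ref{prop:y2px3ypz2p1}, and finish with a parity argument. Equation \eqref{eq:eq94} does not fit this pattern and needs separate treatment.

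\textbf{Step 1: completing squares.} Multiplying by $4$ gives:
\begin{itemize}
\item[] \eqref{eq:eq91} becomes $(2y+x^3)^2+(2z)^2=x^6+8$;
\item[] \eqref{eq:eq92} becomes $(2y+x^3)^2+(2z+1)^2=x^6+5$;
\item[] \eqref{eq:eq93} becomes $(2y+x^3)^2+(2z+1)^2=x^6-3$;
\item[] \eqref{eq:eq94} becomes $(2y+x^3+1)^2+(2z)^2=(x^3+1)^2-4=x^6+2x^3-3$.
\end{itemize}
So for \eqref{eq:eq91}--\eqref{eq:eq93} it suffices to find infinitely many $x$ with $x^6+f\in\mathcal S_2$, for $f=8,5,-3$ respectively. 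For each such $x$ we also need a representation $x^6+f=A^2+B^2$ with $A\equiv x^3 \pmod 2$ and $B$ of the required parity, so that $y=(A-x^3)/2$ and $z=B/2$ or $z=(B-1)/2$ are integers.

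\textbf{Step 2: parity.} For $f=5$ and $f=-3$, $x^6+f\equiv x+1 \pmod 2$, so I want $x$ even. If $x$ is even, $x^6+f$ is odd, so $A,B$ have opposite parity. After swapping, $A$ is even, matching $x^3$, and $B$ is odd, as needed.

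If $x$ is odd, then $x^6+5\equiv 6 \pmod 8$ and $x^6-3\equiv 6 \pmod 8$. Neither is a sum of two squares, since $2\cdot(\text{odd}\equiv3\bmod4)$ is excluded by (*). So all $x$ produced by the algorithm are automatically even.

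For $f=8$ we need $A\equiv x$ and $B$ even. If $x$ is odd, $x^6+8$ is odd, so $A,B$ have opposite parity and we are done. If $x$ is even, $x^6+8=8(8w^6+1)$ with $x=2w$. A representation of an even number has $A\equiv B \pmod 2$; since the number is divisible by $8$, we can take both even. So either parity of $x$ is acceptable.

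\textbf{Step 3: the search.} For each $f\in\{8,5,-3\}$, search over $u$ with $u^3+f\in\mathcal S_2$. For each candidate, form \eqref{eq:auxquad3}, which is \eqref{eq:quadxt} with $a=4(u^3+f)$, $b=0$, $c=-u(u^3-8f)$. Check conditions (a)--(c) of Proposition~\ref{prop:sumaux}:
\begin{itemize}
\item[] (a): $4(u^3+f)$ is not a perfect square, i.e.\ $u^3+f$ is not a square;
\item[] (b): $c\ne0$, i.e.\ $u\neq0$ and $u^3\neq 8f$;
\item[] (c): a seed solution $(x_0,v_0)$ exists, found by solving the Pell-type equation $ax^2+c=v^2$ via continued fractions.
\end{itemize}
Propositions~\ref{prop:sumaux} and~\ref{prop:sumsquarecriterion} then give infinitely many $x$, and Step 2 converts these into integer solutions.

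\textbf{Main obstacles.} The first is Step 3(c), finding a seed. As the $u=162$ example shows, seeds may be huge, so I would try many $u$ and keep one with a small fundamental solution.

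The second is \eqref{eq:eq94}. Its right-hand side $x^6+2x^3-3$ is not of the form $R(x^2)$, and its parity conditions are automatic: for any $x$, $A\equiv x^3+1$ and $B$ even can be arranged. The case of odd $x$ needs care because then $A,B$ are both even, but so is the target. I would first try Proposition~\ref{prop:sumsquarecriterion} directly with $Q(x)=x$ and $R(t)=t^6+2t^3-3$, or restrict to an arithmetic progression. If that fails, I would exploit the factorization $(x^3+3)(x^3-1)$ together with (*). I have not found a choice that makes the auxiliary equation quadratic here, so this case is where I expect the real difficulty.
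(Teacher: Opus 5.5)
\textbf{Equations \eqref{eq:eq91}--\eqref{eq:eq93}.} Your treatment follows the paper's route, and your parity analysis is correct. Showing that $x^6+5$ and $x^6-3$ are $\equiv 6 \pmod 8$ for odd $x$ (hence not in $\mathcal S_2$), and that both parities of $x$ work for $f=8$, is a slightly cleaner alternative to the paper's device. The paper instead substitutes $x=2w$, i.e.\ it applies Proposition~\ref{prop:sumsquarecriterion} with $Q(w)=4w^2$.

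However, as written this part is only a search plan. Algorithm~\ref{alg:main} is not guaranteed to terminate; it provably fails for $x^6+3$. So the proof is incomplete until you exhibit working data. Such data exists and is small. Take $(f,u)=(8,8),(5,2),(-3,2)$. Then $u^3+f$ equals $520=22^2+6^2$, $13=3^2+2^2$ and $5=2^2+1^2$, respectively. The auxiliary equations are $2080x^2-3584=v^2$, $52x^2+64=v^2$ and $20x^2-64=v^2$. In each, $a$ is not a square and $c\neq0$. Seeds are $(x_0,v_0)=(12,544)$, $(6,44)$ and $(2,4)$.

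\textbf{Equation \eqref{eq:eq94}.} This is the genuine gap: you leave it unresolved, and your suggestions would not work as stated. Applying Proposition~\ref{prop:sumsquarecriterion} with $R$ of degree $6$ and $Q$ linear (whether $Q(x)=x$ or an arithmetic progression) makes the auxiliary equation \eqref{eq:auxquad2} quartic in $x$. That is outside the scope of Proposition~\ref{prop:sumaux}. The factorization $(x^3-1)(x^3+3)$ on its own would require two cubics to lie in $\mathcal S_2$ simultaneously, which you have no way to arrange.

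The missing idea is to restrict to $x=-w^2$. The factorization then becomes $(x^3-1)(x^3+3)=(w^6+1)(w^6-3)$. Here $w^6+1=(w^3)^2+1^2\in\mathcal S_2$ for every $w$. Also $w^6-3\in\mathcal S_2$ for infinitely many $w$, by the case $f=-3$ you already handle. Property (*) then gives $(x^3+1)^2-4\in\mathcal S_2$ for all these $x$. Since $w\mapsto -w^2$ is at most two-to-one, this yields infinitely many $x$. Your observation that the parity condition for \eqref{eq:eq94} is automatic then finishes the proof.
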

\begin{proof}
	Equations \eqref{eq:eq91}-\eqref{eq:eq93} can be rewritten in the form
	\begin{equation}\label{eq:eq9194aux}
		(x^3+2y)^2 + (2z+e)^2 = x^6+f,
	\end{equation}
	where $(e,f)=(0,8)$ for \eqref{eq:eq91}, $(e,f)=(1,5)$ for \eqref{eq:eq92}, and $(e,f)=(1,-3)$ for \eqref{eq:eq93}. Hence, we need to prove that $x^6+f \in {\cal S}_2$ infinitely often for $f=8$, $f=5$, and $f=-3$. For these values of $f$, a direct search shows that Algorithm \ref{alg:main} with input \eqref{eq:inputPQ} works for $u=8$, $u=2$, and $u=2$, respectively. Indeed, 
	$$
		8^3+8 = 520 = 22^2+6^2, \quad 2^3+5 = 13 = 3^2+2^2, \quad 2^3-3 = 5 = 2^2+1^2
	$$
	are all sums of two squares. Further, for $(f,u)=(8,8)$, $(5,2)$, and $(-3,2)$, equation \eqref{eq:auxquad3} takes the forms
	$$
		2080 x^2-3584 = v^2, \quad 52 x^2 + 64 = v^2, \quad \text{and} \quad 20 x^2 - 64 = v^2,
	$$
	respectively. We claim that all these equations have infinitely many integer solutions $(x,v)$ with $x=2w$ even, or, equivalently, that equations  
	$$
		8320 w^2-3584 = v^2, \quad 208 w^2 + 64 = v^2, \quad \text{and} \quad 80 w^2 - 64 = v^2,
	$$
	have infinitely many solutions in integers $(w,v)$. Indeed, these are equations of the form \eqref{eq:quadxt} with $(a,b,c)=(8320,0,-3584)$, $(208,0,64)$, and $(80,0,-64)$, respectively. In all cases, $a>0$, $a$ is not a perfect square, and $b^2 - 4 a c \neq 0$. Further, the listed equations have integer solutions $(w_0, v_0) = (6, 544)$, $(w_0, v_0) = (3, 44)$, and $(w_0, v_0) = (1, 4)$, respectively. Hence, the claim follows from Proposition~\ref{prop:sumaux}. Then applying Proposition~\ref{prop:sumsquarecriterion} with $R(t)=t^3+f$ and $Q(w)=4w^2$ shows that for each $f\in\{8,5,-3\}$ there are infinitely many even $x=2w$ for which $x^6+f\in\mathcal S_2$. 
	
	For $f=8$, if $x^6+8=A^2+B^2$ for even $x$, then reduction modulo $4$ shows that both $A$ and $B$ are even; hence we may write $A=x^3+2y$ and $B=2z$ for some integers $y,z$. This proves that equation \eqref{eq:eq9194aux} with $(e,f)=(0,8)$ has infinitely many integer solutions. Similarly, for $f \in \{5,-3\}$, if $x^6+f=A^2+B^2$ for even $x$, then $A$ and $B$ have opposite parity, and we can assume that $A$ is even and $B$ is odd. Then we can write $A=x^3+2y$ and $B=2z+1$ for some integers $y,z$, which proves that equation \eqref{eq:eq9194aux} with $(e,f)=(1,5)$ and $(e,f)=(1,-3)$ has infinitely many integer solutions. This finishes the proof of the corollary for equations \eqref{eq:eq91}-\eqref{eq:eq93}. 
	
	Equation \eqref{eq:eq94} can be rewritten as 
	\begin{equation}\label{eq:eq94aux}
		(x^3 + 2y + 1)^2 + (2z)^2 = (x^3 + 1)^2 - 4,
	\end{equation}
	hence we need to prove that $P(x) = (x^3 + 1)^2 - 4 \in {\cal S}_2$ infinitely often. Put $x=-w^2$. Then
	$$
		P(-w^2) = (-w^6 + 1)^2 - 4 = (w^6-3)(w^6+1).
	$$
	The case $f=-3$ above shows that $w^6-3 \in {\cal S}_2$ for infinitely many even integers $w$. Also, $w^6+1=(w^3)^2+1^2 \in {\cal S}_2$ for all $w$. Hence, applying property (*) with $a=w^6-3$ and $b=w^6+1$, we conclude that $P(-w^2) \in {\cal S}_2$ for infinitely many even $w$. The map $w\mapsto-w^2$ has at most two preimages for each value, so these infinitely many $w$ give infinitely many distinct $x$. 
	If $w$ is even, then $x=-w^2$ is even as well, and equation $P(x) = (x^3 + 1)^2 - 4 = A^2 + B^2$ implies that $A$ and $B$ have opposite parity. By swapping $A$ and $B$ if needed, we can assume that $A$ is odd and $B$ is even. Then we can write $A=x^3+2y+1$ and $B=2z$ for some integers $y,z$, which proves that equation \eqref{eq:eq94aux}, or, equivalently, equation \eqref{eq:eq94}, has infinitely many integer solutions.
\end{proof}

After equations \eqref{eq:eq91}-\eqref{eq:eq94} are resolved in Corollary \ref{prop:eq9194}, the eighteen equations listed in Table \ref{tab:l9openfin} are now the only equations of length $l \leq 9$ for which Problem \ref{prob:finite} remains open. The reader is invited to solve any of these equations. See also \cite{grechuk2024systematic} for the continually updated full list of the current open equations in this project in various categories. 

A natural direction for further work is to develop methods for solving equations of the form \eqref{eq:qx3pc} for which the resulting auxiliary equation \eqref{eq:auxquad2} has local obstructions. For example, we do not know whether equation
\[
y^2+z^2=x^6+3
\]
has infinitely many integer solutions. Applying Algorithm \ref{alg:main} with $R(t)=t^3+3$ and $Q(x)=x^2$, any $u$ satisfying Step~1 must be odd, since even $u$ gives $u^3+3\equiv3\pmod 4$. For odd $u$, the auxiliary equation \eqref{eq:auxquad3} is
\[
4(u^3+3)x^2-u(u^3-24)=v^2,
\]
and its left-hand side is congruent to $3\pmod 4$, which is impossible for a square. Thus, Algorithm \ref{alg:main} cannot resolve this equation, and new ideas are needed.

\section{General binary quadratic forms}\label{sec:otherforms}

In this section, we show that the method of Section \ref{sec:method} extends if the form
$y^2+z^2$ is replaced by an arbitrary non-degenerate integral binary quadratic form
\begin{equation}\label{eq:generalbinaryform}
	F(y,z)=Ay^2+Byz+Cz^2, \qquad A,B,C\in {\mathbb Z}.
\end{equation}
Recall that $F$ is called non-degenerate if its discriminant
$$
\Delta=B^2-4AC
$$
satisfies
\begin{equation}\label{eq:nonddegF}
	\Delta\ne 0.
\end{equation}
We briefly discuss the degenerate case at the end of the section.

Let $R(t)$ and $Q(x)$ be univariate polynomials with integer coefficients, and define $D_u(t)$ by \eqref{eq:taylorDu}.  Suppose that, for some integer $u$, the value $R(u)$ is non-zero and is represented by $F$, say
\begin{equation}\label{eq:basepointF}
	m:=R(u)=F(p,q)=Ap^2+Bpq+Cq^2\ne 0, \quad \text{for some} \quad p,q\in {\mathbb Z}.
\end{equation}

The core of the method is the following tangent-line idea.  Write
$t=u+s$ and try to represent $R(t)$ as $F(y,z)$ by points lying on an affine line through the known point $(p,q)$:
\begin{equation}\label{eq:yzdef}
y=p+s\lambda, \qquad z=q+s\mu .
\end{equation}
On the one hand, by \eqref{eq:taylorDu},
$$
R(t) = R(u+s)=m+sr+s^2D_u(u+s),
$$
where $r:=R'(u)$. On the other hand,
\begin{equation}\label{eq:expansion}
	F(y,z) = F(p+s\lambda,q+s\mu)=m+s\bigl(2Ap\lambda+B(p\mu+q\lambda)+2Cq\mu\bigr)+s^2F(\lambda,\mu).
\end{equation}
Thus, to ensure that $R(t)=F(y,z)$, it is sufficient to have
\begin{equation}\label{eq:conditions}
2Ap\lambda+B(p\mu+q\lambda)+2Cq\mu=r
\quad \text{and} \quad
F(\lambda,\mu)=D_u(t).
\end{equation}
The first of these equations is a line in the $(\lambda,\mu)$-plane.  A convenient parametrization of this line is
\begin{equation}\label{eq:lambdamu}
\lambda=\frac{rp+v(Bp+2Cq)}{2m},
\qquad
\mu=\frac{rq-v(2Ap+Bq)}{2m},
\end{equation}
where $v$ is a parameter; geometrically, the terms multiplied by $v$ give the tangent direction to the conic $F(y,z)=m$ at $(p,q)$.  Substituting this parametrization into the second condition in \eqref{eq:conditions} gives
$$
4mD_u(t)-r^2=-\Delta v^2.
$$
Substituting $t=Q(x)$ in this equation, and adding the congruence conditions to ensure that $\lambda$ and $\mu$ are integers, results in the following proposition.

\begin{proposition}\label{prop:generalformmethod}
	Assume \eqref{eq:nonddegF} and \eqref{eq:basepointF}, let $r=R'(u)$, and let $D_u(\cdot)$ be defined in \eqref{eq:taylorDu}.  If there are infinitely many integer pairs $(x,v)$ satisfying
	\begin{equation}\label{eq:generalformaux}
		4mD_u(Q(x))-r^2=-\Delta v^2
	\end{equation}
	and the congruences
	\begin{equation}\label{eq:generalformcong}
		\begin{split}
			rp+v(Bp+2Cq)&\equiv 0 \pmod {2|m|},\\
			rq-v(2Ap+Bq)&\equiv 0 \pmod {2|m|},
		\end{split}
	\end{equation}
	then the equation
	$$
	F(y,z)=R(Q(x))
	$$
	is solvable in integers $(y,z)$ for infinitely many integers $x$.  More precisely, for every pair $(x,v)$ satisfying \eqref{eq:generalformaux} and \eqref{eq:generalformcong}, one obtains a solution by putting
	\begin{equation}\label{eq:generalformYZ}
		\begin{split}
			y&=p+(Q(x)-u)\frac{rp+v(Bp+2Cq)}{2m},\\
			z&=q+(Q(x)-u)\frac{rq-v(2Ap+Bq)}{2m}.
		\end{split}
	\end{equation}
\end{proposition}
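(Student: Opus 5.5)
The plan is to verify directly that for each pair $(x,v)$ satisfying \eqref{eq:generalformaux} and \eqref{eq:generalformcong}, the pair $(y,z)$ in \eqref{eq:generalformYZ} consists of integers with $F(y,z)=R(Q(x))$. After that, a counting argument as in Proposition~\ref{prop:sumsquarecriterion} shows that infinitely many distinct $x$ occur.

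\textbf{Integrality.} Set $s=Q(x)-u$ and let $\lambda,\mu$ be given by \eqref{eq:lambdamu}. The congruences \eqref{eq:generalformcong} say exactly that $\lambda,\mu\in\mathbb Z$. Hence $y=p+s\lambda$ and $z=q+s\mu$ are integers.

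\textbf{The identity $F(y,z)=R(Q(x))$.} By \eqref{eq:expansion} and the Taylor form $R(u+s)=m+sr+s^2D_u(u+s)$ from \eqref{eq:taylorDu}, it suffices to check the two conditions in \eqref{eq:conditions}.

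For the linear condition, substitute \eqref{eq:lambdamu} into $L(\lambda,\mu):=(2Ap+Bq)\lambda+(Bp+2Cq)\mu$. The $v$-terms cancel, since
$$(2Ap+Bq)(Bp+2Cq)-(Bp+2Cq)(2Ap+Bq)=0.$$
The $r$-terms give
$$\frac{r\bigl(2Ap^2+2Bpq+2Cq^2\bigr)}{2m}=\frac{r\cdot 2m}{2m}=r.$$

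For the quadratic condition, I would compute $4m^2F(\lambda,\mu)=F(rp+v\alpha,\,rq-v\beta)$, where $\alpha=Bp+2Cq$ and $\beta=2Ap+Bq$. Expanding gives three pieces:
\begin{itemize}
\item the $r^2$ term is $r^2F(p,q)=r^2m$;
\item the cross term is $rv$ times the polar form of $(p,q)$ with $(\alpha,-\beta)$, which equals $\beta\alpha-\alpha\beta=0$;
\item the $v^2$ term is $v^2F(\alpha,-\beta)$.
\end{itemize}
The one genuine computation is $F(Bp+2Cq,\,-(2Ap+Bq))=-\Delta\,m$. It can be checked by direct expansion, or by noting that $(\alpha,-\beta)$ is $J$ applied to the gradient, where $J$ is the adjugate-type map under which $F$ is scaled by $\det(\text{Gram})$.

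It follows that $4mF(\lambda,\mu)=r^2-\Delta v^2$. By \eqref{eq:generalformaux} this equals $4mD_u(Q(x))$, and since $m\neq0$ we get $F(\lambda,\mu)=D_u(Q(x))=D_u(u+s)$. Both conditions in \eqref{eq:conditions} hold, so $F(y,z)=R(Q(x))$.

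\textbf{Infinitely many $x$.} Since $\Delta\neq0$, for each fixed $x$ equation \eqref{eq:generalformaux} admits at most two values of $v$. Infinitely many pairs $(x,v)$ therefore involve infinitely many distinct $x$, and each such $x$ has $R(Q(x))$ represented by $F$. Unlike Proposition~\ref{prop:sumsquarecriterion}, no zero-exclusion is needed, since the statement only asks for solvability.

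The main obstacle is the identity $F(\alpha,-\beta)=-\Delta m$. It is a routine but error-prone polynomial expansion, and I would verify it by expanding in the monomials $p^2$, $pq$, $q^2$.
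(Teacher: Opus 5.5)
Your proposal is correct and follows essentially the same route as the paper. You set $s=Q(x)-u$, read integrality of $\lambda,\mu$ off the congruences, verify the linear condition and $4mF(\lambda,\mu)=r^2-\Delta v^2$, combine these with the Taylor identity, and use $\Delta\neq0$ to get at most two values of $v$ per $x$. You also carry out the substitution that the paper only asserts, namely the vanishing cross term and $F(Bp+2Cq,\,-(2Ap+Bq))=-\Delta m$, and both computations check out.
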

\begin{proof}
	Let $(x,v)$ satisfy \eqref{eq:generalformaux} and \eqref{eq:generalformcong}. Put
	$
	s=Q(x)-u,
	$
	and define $\lambda$ and $\mu$ by \eqref{eq:lambdamu}. The congruences \eqref{eq:generalformcong} imply that $\lambda,\mu\in{\mathbb Z}$. Then \eqref{eq:generalformYZ} is just \eqref{eq:yzdef}.
	Expanding $F(y,z)$ gives \eqref{eq:expansion}.
	
	Using $m=F(p,q)=Ap^2+Bpq+Cq^2$ and $\Delta=B^2-4AC$, direct substitution of the expressions \eqref{eq:lambdamu} for $\lambda$ and $\mu$ gives
	$$
	2Ap\lambda+B(p\mu+q\lambda)+2Cq\mu=r
	\quad
	\text{and}
	\quad
	4mF(\lambda,\mu)=r^2-\Delta v^2.
	$$
	By \eqref{eq:generalformaux}, the second identity is equivalent to
	$
	F(\lambda,\mu)=D_u(Q(x)).
	$
	Hence, \eqref{eq:expansion} implies that
	$$
	F(y,z)=m+sr+s^2D_u(Q(x)).
	$$
	Since $s=Q(x)-u$, the Taylor identity \eqref{eq:taylorDu} gives
	$$
	m+sr+s^2D_u(Q(x))=R(Q(x)).
	$$
	Therefore \eqref{eq:generalformYZ} gives an integer solution of
	$$
	F(y,z)=R(Q(x)).
	$$
	Finally, for any fixed $x$, equation \eqref{eq:generalformaux} has at most two possible values of $v$, because $\Delta\ne0$. Thus infinitely many pairs $(x,v)$ satisfying \eqref{eq:generalformaux} have infinitely many distinct $x$-coordinates.
\end{proof}

It remains to investigate when the auxiliary equation \eqref{eq:generalformaux} has infinitely many solutions satisfying the congruences \eqref{eq:generalformcong}. As in Section \ref{sec:method}, we assume that the degrees $(\deg R, \deg Q)$ are equal to either $(3,1)$, or $(4,1)$, or $(3,2)$. Then the polynomial $D_u(Q(x))$ has degree at most two, and \eqref{eq:generalformaux} takes the form
\begin{equation}\label{eq:generalformpellraw}
	ax^2+bx+c= - \Delta v^2,
\end{equation}
for some integers $a,b,c$.

\begin{proposition}\label{prop:infaux}
Assume that
\begin{itemize}
	\item[(a)] either $a=0$ or $-a\Delta$ is a positive integer that is not a perfect square, 
	\item[(b)] $b^2 - 4 a c \neq 0$, and
	\item[(c)] equation \eqref{eq:generalformpellraw} has an integer solution $(x_0,v_0)$ satisfying \eqref{eq:generalformcong}. 
\end{itemize}
Then equation \eqref{eq:generalformpellraw} has infinitely many integer solutions $(x,v)$ satisfying \eqref{eq:generalformcong}. 
\end{proposition}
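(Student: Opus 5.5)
The plan is to follow the proof of Proposition~\ref{prop:sumaux}, while keeping track of the congruences \eqref{eq:generalformcong}. The key observation is that both congruences in \eqref{eq:generalformcong} are linear in $v$ and do not involve $x$. Hence whether a pair $(x,v)$ satisfies them depends only on the residue of $v$ modulo $2|m|$. So it suffices to produce infinitely many solutions $(x,v)$ of \eqref{eq:generalformpellraw} with $v\equiv v_0 \pmod{2|m|}$, where $(x_0,v_0)$ is the seed solution from (c).

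\emph{Case $a=0$.} Condition (b) gives $b\neq 0$. Take any integer $v\equiv v_0 \pmod{2|m||b|}$. Then $-\Delta v^2-c\equiv -\Delta v_0^2-c = bx_0\equiv 0 \pmod{|b|}$, so $x=(-\Delta v^2-c)/b$ is an integer. This gives a solution of \eqref{eq:generalformpellraw} with $v\equiv v_0\pmod{2|m|}$. Infinitely many such $v$ give infinitely many solutions, exactly as in Proposition~\ref{prop:sumaux}.

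\emph{Case $d:=-a\Delta>0$ not a perfect square.} Complete the square by multiplying \eqref{eq:generalformpellraw} by $4a$. With $X=2ax+b$ and $N=b^2-4ac\neq0$ this becomes
$$X^2-4d\,v^2=N.$$
Since $4d$ is not a square, the Pell equation $\alpha^2-4d\beta^2=1$ has a solution $(\alpha_1,\beta_1)$ with $\beta_1\neq 0$. Let $M=2|a|\cdot 2|m|$. The matrices $\begin{pmatrix}\alpha&4d\beta\\ \beta&\alpha\end{pmatrix}$ attached to the powers $(\alpha_1+\beta_1\sqrt{4d})^k$ reduce modulo $M$ into the finite group $GL_2(\mathbb Z/M)$. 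Hence some power $k_0\ge1$ reduces to the identity, which gives $\alpha\equiv 1$ and $\beta\equiv 0 \pmod M$ for all powers of $(\alpha,\beta)=(\alpha_{k_0},\beta_{k_0})$.

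Starting from $X_0=2ax_0+b$, define
$$(X_j,v_j):=(\alpha X_{j-1}+4d\beta v_{j-1},\ \beta X_{j-1}+\alpha v_{j-1}).$$
Each $(X_j,v_j)$ again satisfies $X^2-4dv^2=N$. Modulo $M$ we have $X_j\equiv X_0$ and $v_j\equiv v_0$. Hence $X_j\equiv b\pmod{2a}$, so $x_j=(X_j-b)/(2a)$ is an integer. Moreover $v_j\equiv v_0\pmod{2|m|}$, so the congruences \eqref{eq:generalformcong} still hold.

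The pairs $(X_j,v_j)$ are pairwise distinct. Indeed, $X_j+v_j\sqrt{4d}=(X_0+v_0\sqrt{4d})\varepsilon^j$ with $\varepsilon=\alpha+\beta\sqrt{4d}$ not a root of unity, and $X_0+v_0\sqrt{4d}\neq0$ because $N\neq0$. This yields infinitely many solutions. As in the paper, each $x$ corresponds to at most two values of $v$, so infinitely many distinct $x$ occur as well.

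The main point needing care is the choice of the modulus $M$. It must simultaneously guarantee integrality of $x$, which requires control of $X$ modulo $2a$, and preservation of \eqref{eq:generalformcong}, which requires control of $v$ modulo $2|m|$. This is handled by the finite-group argument that makes the Pell unit congruent to the identity matrix modulo $M$.
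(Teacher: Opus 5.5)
Your proof is correct. In the case $a=0$ it coincides with the paper's argument: taking $v=v_0+2mw$ with $w\equiv 0 \pmod{|b|}$ is exactly your choice $v\equiv v_0\pmod{2|m||b|}$.

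For $a\neq 0$ you take a genuinely different route. The paper builds the congruences into the variable by substituting $v=v_0+2mw$, so that \eqref{eq:generalformcong} holds automatically for every $w$. It then notes that the resulting quadratic equation in $(x,w)$ is nonsingular, has the solution $(x_0,0)$, and has discriminant $16m^2a(-\Delta)$, which is positive and not a square. Infinitude then follows by quoting Gauss's theorem \cite[Proposition 3.14]{MR4823864}.

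You instead complete the square to $X^2-4dv^2=N$ and construct the solutions directly. You use a power of a Pell unit that is congruent to the identity matrix modulo $M=4|a||m|$. This single modulus preserves both the integrality of $x=(X-b)/(2a)$ and the residue of $v$ modulo $2|m|$. Your norm argument ($N\neq 0$, $\varepsilon$ not a root of unity) then gives distinctness.

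The paper's version is shorter because the residue-control issue is hidden inside the cited theorem, whose standard proof contains essentially your argument. Yours is self-contained, makes the mechanism explicit, and produces an explicit infinite orbit of solutions.
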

\begin{proof}
	We will look for solutions in the form $v=v_0+2mw$ for some $w\in {\mathbb Z}$. Because $v_0$ satisfies \eqref{eq:generalformcong}, so does $v_0+2mw$ for every $w$. Substituting $v=v_0+2mw$ into \eqref{eq:generalformpellraw} results in a quadratic equation 
	\begin{equation}\label{eq:xwquad}
		ax^2+bx+c= - \Delta (v_0+2mw)^2
	\end{equation}
	in variables $(x,w)$, which has an integer solution $(x,w)=(x_0,0)$. We need to prove that \eqref{eq:xwquad} has infinitely many integer solutions. If $a\neq0$, then, viewed as a quadratic equation in the variables $(x,w)$, \eqref{eq:xwquad} has discriminant $16m^2a(-\Delta)$, which is positive and not a perfect square by (a), and it is nonsingular by (b). Hence the existence of one integer solution implies infinitely many integer solutions by Gauss's theorem \cite[Proposition 3.14]{MR4823864}. If $a=0$, then (b) implies that $b\neq 0$, and the question reduces to finding infinitely many integers $w$ such that $x=(-\Delta(v_0+2mw)^2-c)/b$ is an integer. Because $(x,w)=(x_0,0)$ is a solution to \eqref{eq:xwquad}, this is true for $w=0$, and therefore is true for every $w$ satisfying $w\equiv 0 \pmod{|b|}$.
\end{proof}

%\paragraph{Formalization of Proposition~\ref{prop:infaux}.}\label{par:lean42}
%The proof above retains the published conic theorem
%\cite[Proposition~3.14]{MR4823864}. The Lean proof instead completes the square
%and applies \texttt{genPell\_infinite\_cong}, a residue-controlled Pell result:
%a positive power of a Pell unit is congruent to $(1,0)$ modulo a prescribed
%modulus, and its iterates therefore preserve the seed's residue class.
%This auxiliary result is proved inside the formalization, not assumed as an axiom.
%It is not needed as a separate lemma in the manuscript.
%The assumption $\Delta\ne0$ is unnecessary for the infinitude of pairs in
%Proposition~\ref{prop:infaux} itself: when $\Delta=0$, condition (a) forces $a=0$,
%and the linear-case argument still gives infinitely many $v$ in the required
%residue class. In that case $x=-c/b$ is fixed; the assumption $\Delta\ne0$ remains
%needed in Proposition~\ref{prop:generalformmethod} to infer infinitely many distinct $x$.

All conditions in Proposition \ref{prop:infaux} are easy to check, including condition (c). 
For fixed $u,p,q$, put $M=2|m|>0$. Enumerate the finitely many residues
$j\in\{0,\ldots,M-1\}$ satisfying the two linear congruences
\eqref{eq:generalformcong}. For each such residue substitute $v=j+Mw$ into
\eqref{eq:generalformpellraw}, obtaining the binary quadratic equation
\[
 ax^2+bx+c=-\Delta(j+Mw)^2,\qquad (x,w)\in\mathbb Z^2.
\]
As mentioned in Section \ref{sec:method}, its solubility can be decided either by applying the general algorithm of Grunewald and Segal~\cite{MR2055712}, or by using well-known faster methods, see, e.g., \cite[Section 3.1]{MR4823864} and \cite{Lagarias1979}. For $a=0$, the verification is particularly simple: condition (b) gives $b\ne0$, and it suffices instead to enumerate $v$ modulo $\operatorname{lcm}(M,|b|)$ and test both \eqref{eq:generalformcong} and $b\mid-\Delta v^2-c$; then $x=(-\Delta v^2-c)/b$. These are complete tests for each fixed choice of $u,p,q$.

The described method is summarized in the following algorithm.
\begin{algorithm}\label{alg:generalform}
	~
	\begin{itemize}
		\item \textbf{Input:} Non-degenerate integral binary quadratic form $F$, and polynomials $R(t)$ and $Q(x)$ with integer coefficients such that $(\deg R, \deg Q)$ is equal to either $(3,1)$, or $(4,1)$, or $(3,2)$. 
		\item \textbf{Output:} If the algorithm terminates, then equation $F(y,z)=R(Q(x))$ is solvable in integers $(y,z)$ for infinitely many integers $x$. 
	\end{itemize}
	\begin{enumerate}
		\item Find integers $u,p,q$ such that $R(u)=F(p,q)\ne0$.
		\item Form the auxiliary equation \eqref{eq:generalformaux} and the congruences \eqref{eq:generalformcong}. Write  \eqref{eq:generalformaux} in the form \eqref{eq:generalformpellraw}. 
		\item Check whether conditions (a)--(c) in Proposition \ref{prop:infaux} are satisfied for  \eqref{eq:generalformpellraw}. If yes, STOP. Otherwise return to Step 1 and try a different triple $(u,p,q)$.
	\end{enumerate}
\end{algorithm}

Although Algorithm \ref{alg:generalform} is not guaranteed to terminate for all inputs due to the possible non-existence of suitable seed data, its power is that it does not impose any condition on the binary quadratic form $F$ except non-degeneracy. In particular, it does not require multiplicativity. Recall that a binary quadratic form $F$ is called multiplicative if, whenever it represents positive integers $m$ and $n$, it must represent their product $mn$. \cite[Section 5.4]{MR4823864} develops powerful methods for solving Problem \ref{prob:finite} for equations of the form $F(y,z)=P(x)$ for cubic $P$, but these methods are applicable only to multiplicative forms $F$. The simplest example of a form that is not multiplicative is
$$
F(y,z) = 2y^2 + yz + 2z^2.
$$
For example, it represents $2$ because $F(0,1)=2$, but does not represent $2 \cdot 2 = 4$, because equation $2y^2+yz+2z^2=4$ has no integer solutions: modulo $3$, its left-hand side is $-(y+z)^2$, which cannot be $1$. For this reason, as remarked in \cite[Section 5.4]{MR4823864}, their methods are not applicable to, for example, equations 
\begin{equation}\label{eq:2y2pyzp2z2}
	(a) \quad 2y^2+yz+2z^2 = x^3+1 \quad \text{and} \quad (b) \quad 2y^2+yz+2z^2 = x^3-1.
\end{equation}

Algorithm \ref{alg:generalform} solves Problem \ref{prob:finite} for these equations easily.

\begin{proposition}\label{prop:examples}
	 Both equations \eqref{eq:2y2pyzp2z2} have infinitely many integer solutions. 
\end{proposition}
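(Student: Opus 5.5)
We will apply Algorithm~\ref{alg:generalform} with $F(y,z)=2y^2+yz+2z^2$, so that $(A,B,C)=(2,1,2)$ and $\Delta=1-16=-15\neq 0$, and with $Q(x)=x$ and $R(t)=t^3\pm1$, i.e.\ $(\deg R,\deg Q)=(3,1)$. Here $r=R'(u)=3u^2$ and $D_u(t)=t+2u$, so the auxiliary equation \eqref{eq:generalformaux} becomes the linear-in-$x$ equation
\[
4m x+8mu-9u^4=15v^2,\qquad m=R(u)=F(p,q).
\]
This is \eqref{eq:generalformpellraw} with $a=0$ and $b=4m$, so conditions (a) and (b) of Proposition~\ref{prop:infaux} hold automatically (since $b^2-4ac=16m^2\ne0$). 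Only the seed condition (c) has to be checked, by finding one $v$ that satisfies both congruences \eqref{eq:generalformcong} and makes $x$ an integer. Propositions~\ref{prop:infaux} and \ref{prop:generalformmethod} then give infinitely many $x$.

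For equation (a), $R(t)=t^3+1$, we take $u=1$, so $m=2=F(0,1)$ with $(p,q)=(0,1)$ and $r=3$. The auxiliary equation reads $8x+7=15v^2$. The congruences \eqref{eq:generalformcong} modulo $4$ become $4v\equiv0$, which always holds, and $3-v\equiv 0$. Hence we need $v\equiv 3\pmod 4$, which also gives $15v^2\equiv 7\pmod 8$. Taking $v=3$ gives $x=16$, a valid seed. We would then optionally verify it by computing $(y,z)$ from \eqref{eq:generalformYZ}.

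For equation (b), $R(t)=t^3-1$, the value $u=1$ gives $m=0$, which is excluded. Small $u$ also fail: $7$, $26$, $63$, $124$ and $215$ are not represented by $F$, as quadratic-reciprocity checks such as $(-15/7)=-1$ show for $7$. The plan is therefore to search $u=7,8,\dots$, and also negative $u$ in case $u^3-1<0$ (these are useless since $F$ is positive definite). For each $u$ we would find $(p,q)$ with $F(p,q)=u^3-1$, using the fact that $8F(y,z)=(4y+z)^2+15z^2$. We would then enumerate $v$ modulo $\operatorname{lcm}(2m,4m)=4m$, testing the congruences \eqref{eq:generalformcong} together with $4m\mid 15v^2+9u^4-8mu$.

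The main obstacle is this seed search for (b): finding $u$ with $u^3-1$ represented by this particular non-principal form. The residue test can then still fail for parity or $3$-adic reasons; note that $15v^2+9u^4$ must be divisible by $4$, which forces $u$ and $v$ to have compatible parity. If so, we move on to the next $u$. We expect a modest search to succeed, exactly as in Section~\ref{sec:applications}, and we would report the explicit triple $(u,p,q)$ together with the seed $(x_0,v_0)$.
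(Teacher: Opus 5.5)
Your treatment of equation (a) is correct and is essentially the paper's argument. The base point $(p,q)=(0,1)$ instead of $(1,0)$ is immaterial by the symmetry $y\leftrightarrow z$ of $F$, and your seed $v=3$, $x=16$ does give $F(45,1)=4097=16^3+1$. Your reduction of (b) to the seed condition is also right: $a=0$ and $b=4m\neq0$ make conditions (a) and (b) of Proposition~\ref{prop:infaux} automatic.

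\textbf{The gap is in (b): you never produce a seed.} Algorithm~\ref{alg:generalform} is not guaranteed to terminate, so ``we expect a modest search to succeed'' establishes nothing. The whole infinitude claim for (b) rests on verifying condition (c) of Proposition~\ref{prop:infaux} for some explicit triple $(u,p,q)$, and that verification is missing.

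Your exclusion of $u\le 6$ is correct, with one caveat. For $124=4\cdot31$ a Legendre-symbol check does not suffice, since $2$ and $31$ both split in discriminant $-15$. Use instead $F\equiv-(y+z)^2\pmod 3$, which rules out $124\equiv1\pmod3$.

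The missing seed is the very first candidate of your search, and it is exactly what the paper uses:
\begin{itemize}
\item Take $u=7$, so $m=342=F(3,12)$, $r=147$ and $D_7(t)=t+14$.
\item The auxiliary equation is $1368x-2457=15v^2$, i.e.\ $456x-819=5v^2$.
\item The congruences \eqref{eq:generalformcong} become $441+51v\equiv0$ and $1764-24v\equiv0\pmod{684}$. Both hold for every $v\equiv45\pmod{228}$.
\item For such $v$ one has $5v^2+819\equiv0\pmod{456}$. So $v=45+228n$ gives the integer $x=570n^2+225n+24$.
\item For $n=0$, \eqref{eq:generalformYZ} gives $(y,z)=(71,29)$, and indeed $F(71,29)=13823=24^3-1$.
\end{itemize}
Once this seed is recorded, your argument for (b) is complete and coincides with the paper's.
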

\begin{proof}
	In this case, $A=C=2$, $B=1$, and $\Delta=-15$. For equation (a), take
$$
R(t)=t^3+1, \qquad Q(x)=x, \qquad u=1, \qquad (p,q)=(1,0).
$$
Then $m=R(u)=2=F(1,0)$, $r=R'(u)=3$, and $D_u(t)=t+2$.  The auxiliary equation \eqref{eq:generalformaux} is
$$
8x+7=15v^2.
$$
The congruences \eqref{eq:generalformcong} become
$$
3+v\equiv0\pmod 4,
\qquad
-4v\equiv0\pmod 4.
$$
They are therefore satisfied by every $v\equiv1\pmod4$.  For every such $v$, we have $v^2\equiv1\pmod8$, so the value
$$
x=\frac{15v^2-7}{8}
$$
is an integer.  Equivalently, putting $v=1+4n$ gives $x=30n^2+15n+1$ for any $n \in {\mathbb Z}$.
The recovery formulas~\eqref{eq:generalformYZ} give the explicit family
\[
\begin{aligned}
 x&=30n^2+15n+1,\\
 y&=30n^3+45n^2+15n+1,\\
 z&=-120n^3-90n^2-15n.
\end{aligned}
\]

Similarly, for equation (b), take
$$
R(t)=t^3-1, \qquad Q(x)=x, \qquad u=7, \qquad (p,q)=(3,12).
$$
Then $m=R(u)=342=F(3,12)$, $r=R'(u)=147$, and $D_u(t)=t+14$.  The auxiliary equation \eqref{eq:generalformaux} is
$$
456x-819=5v^2.
$$
The congruences \eqref{eq:generalformcong} are
$$
441+51v\equiv0\pmod {684},
\qquad
1764-24v\equiv0\pmod {684}.
$$
Both congruences are satisfied whenever $v\equiv45\pmod {228}$; for every $v$ in this residue class, the value
$$
x=\frac{5v^2+819}{456}
$$
is an integer.  Equivalently, putting $v=45+228n$ gives $x=570n^2+225n+24$ for any $n \in {\mathbb Z}$.
Here the recovery formulas give
\[
\begin{aligned}
 x&=570n^2+225n+24,\\
 y&=9690n^3+6105n^2+1189n+71,\\
 z&=-4560n^3-1230n^2+89n+29.
\end{aligned}
\]
In both cases $x$ is a non-constant polynomial in $n$, and therefore takes
infinitely many integer values. This proves the required infinitude.
\end{proof}

Algorithm \ref{alg:generalform} requires a quadratic form $F$ to be non-degenerate. In the degenerate case $\Delta=0$, condition $B^2-4AC=0$ implies that, for some integers $k,r,s$,
$$
(A,B,C)=(kr^2,2krs,ks^2).
$$
If $(A,B,C)=(0,0,0)$, then equation $F(y,z)=P(x)$ reduces to $P(x)=0$, and Problem \ref{prob:finite} for this equation is trivial. Otherwise $k\neq 0$ and $(r,s)\neq (0,0)$, and we have
$$
F(y,z) = Ay^2 + Byz + Cz^2 = k(ry+sz)^2,
$$  
hence any equation of the form $F(y,z)=P(x)$ reduces to
\begin{equation}\label{eq:degen}
 k t^2 = P(x),
\end{equation}
where $t=ry+sz$. There are general algorithms for describing all integer solutions $(x,t)$ to \eqref{eq:degen}, see \cite[Proposition 3.67]{MR4823864}. The original equation $F(y,z)=P(x)$ has infinitely many integer solutions if and only if \eqref{eq:degen} has a solution $(x_0,t_0)$ such that $t_0$ is divisible by $\gcd(r,s)$. 

It is interesting to compare the theory developed here with Section \ref{sec:method}. For $F(y,z)=y^2+z^2$, we have $\Delta=-4$, and \eqref{eq:generalformaux} becomes $4mD_u(Q(x))-r^2=4v^2$. Writing $V=2v$ gives the auxiliary equation \eqref{eq:auxquad2} of Section~\ref{sec:method}, up to the names of the variables. Conversely, an integer solution $(x,V)$ of \eqref{eq:auxquad2} has $V$ even, by reduction modulo~$4$, which allows us to reduce it back to \eqref{eq:generalformaux} with $\Delta=-4$.

The general construction in this section imposes the additional congruences \eqref{eq:generalformcong} to make its particular tangent-line coordinates integral. Section~\ref{sec:method} avoids them: identity~\eqref{eq:generalTangentIdentity} represents $4mR(Q(x))$, and property~(*) cancels the represented positive factor $4m$. It therefore proves the existence of an integral representation without requiring that particular tangent-line representation to be integral. We remark that this simplification uses both multiplicativity and the cancellation direction of (*); multiplicativity alone would not be sufficient for cancellation. For example, $4(y^2+z^2)$ is multiplicative and represents both $4$ and $4\cdot1$, but not~$1$. For binary quadratic forms satisfying the analogue of (*), such as $y^2+2z^2$, a theory similar to the one in Section~\ref{sec:method} can in principle be developed, but we decided instead to present the general Algorithm \ref{alg:generalform} applicable to all non-degenerate binary quadratic forms. 
%For another form whose represented positive integers satisfy (*), a suitable represented-product identity permits the same cancellation. 
%Even when only multiplicativity is available, a factorization $P(x)=P_1(x)P_2(x)$ can simplify an application: on any infinite set where both factors are positive and represented by the form, their product is represented, without a new tangent construction or new congruence conditions. The factorization used for the last equation in Corollary~\ref{prop:eq9194} is an instance of this simplification.

\end{document}